\newtheorem{theorem}{Theorem}[section]
\newtheorem{definition}{Definition}[section]
\newtheorem{remark}{Remark}[section]
\begin{document}
\begin{center}
\textbf{V.~I.~ Korobov, T.~V.~Revina\\On robust feedback for systems with multidimensional control}\\
\end{center}
Department of Applied Mathematics, School of Mathematics and
Computer Science,\\
V. N. Karazin Kharkiv National University,\\
E-mail address: vkorobov@univer.kharkov.ua, t.revina@karazin.ua\\

The paper deals with local robust feedback synthesis for systems
with multidimensional control and unknown bounded perturbations.
Using V.~I.~Korobov's controllability function method, we
construct a bounded control which steers an arbitrary initial
point to the origin in some finite time;  an estimate from above
for the time of motion is given. We have found the range of a
segment where the perturbations can vary. As an example we
consider the problem of stopping the oscillations of the system of
two coupled pendulums. \vskip2mm

{\em  Key words}: controllability function method, systems with
multidimensional control, robust feedback synthesis, finite-time
stabilization, unknown bounded perturbations, uncertain systems.
\smallskip

{\em Mathematics  Subject  Classification  2010}: 93B50, 93D09, 93C73,
70Q05.

\begin{center}
\section{Introduction and Problem Statement}
\end{center}

The paper deals with the synthesis problem, i.e. the problem of
constructing a control which depends on phase coordinates and
steers an arbitrary initial point from some neighborhood of the
origin to the origin in some finite time. Besides the control
should satisfy some preassigned constrains. In  \cite{Kor2}
methods for solving the feedback synthesis problem for a linear
system are given. Further we consider the synthesis problem for
the linear system with continuous \textit{bounded unknown
perturbations}. In the present paper we find such constraint for
the unknown perturbations that the control which solves the
synthesis problem for the system without the perturbation also
solves the synthesis problem for the perturbed system.

For the first time, the concept of the feedback synthesis has been
introduced and investigated in paper \cite{korobov_1} written in
Russian. In the English translation of this paper and in other
papers of its author,  this concept has been literally translated
from Russian as ''positional synthesis''. Later the concept has
been introduced and studied in \cite{Desoer,Kravaris} wherein it
has been called ``feedback synthesis''. Now, the term ''feedback
synthesis'' is generally used for the concept of the synthesis
introduced in \cite{korobov_1}. The controllability function
method is introduced in \cite{korobov_1}. In this method the angle
between the direction of motion and the direction of decrease of
the controllability function is not less than the corresponding
angle in the dynamic programming method, and no more than in a
method of Lyapunov function \cite[p. 10]{Kor2}. The main advance
of the controllability function method is finiteness of the motion
time. Among other authors developing such approach we would like
to mention \cite{Weiss}. Herein the concept of finite time
stability involves the bounding of trajectories within specifical
domains of the state space during a given finite time interval. A
bit later, the problem of steering an arbitrary initial point from
some neighborhood of the origin to the origin (or in general case
in equilibrium point) in a finite time has been called
''finite-time stabilization'' (see, e.g., \cite{Ryan,Bhat}). In
contrast to this problem the controllability function method is
solve  the problem of steering an arbitrary initial point to
generally non-equilibrium point in a finite time. The paper
\cite{non-equilibrium} is devoted to the problem of construction
of a constrained control, which transfers a control system from
any point to a given non-equilibrium point in a finite time in
global sense.

Let us consider the system
\begin{equation}\label{f_1} \dot{x}=(A_0+K+R(t,x))x+B_0u,
\end{equation}
 where $t\geq 0,\;$ $x\in Q\subset\mathbb{R}^n,$ $Q$ is
a neighborhood of the origin; $u\in \mathbb{R}^r$ is a control
satisfying the constraint $\|u\|\leq 1;$
 $ A_0$ is $ (n\times n)$ matrix of the form
 $A_0=\textrm{diag} \left(A_{01}, \ldots, A_{0r}\right),$
where $A_{0i}$ are $(n_i\times n_i)$ matrices of the form
$A_{0i}=\left(\begin{array}{cccccc}
0&1&0&\ldots&0&0\\
0&0&1&\ldots&0&0\\
0&0&0&\ldots&0&1\\
0&0&0&\ldots&0&0
\end{array}\right),$  $i=1,\ldots,r;\;$ $n_1\ge n_2\ge\ldots\ge n_r\ge 1,\;$
$n_1+\ldots+n_r{=}n;\;$ $B_0$ is a $(n\times r)$ matrix whose
elements $(B_0)_{s_ii}$ are equal to  1, $s_i=n_1{+}\ldots{+}
n_i,\;$ $i=1,\ldots,r$ and the others are equal to zero; the
elements of matrix $K$ which are in row $s_i$ (in other words a
row which contains a control) are equal to $k_{s_ij},$ and the
other elements are equal to zero,
$R(t,x)=\textrm{diag}\left(R_1(t,x), \ldots,
R_r(t,x)\right)+\hat{R}(t,x),\;$ $R_i(t,x)=$
\begin{equation}\label{newr}
=\left(\begin{array}{ccccccc}
r_{(s_{i-1}+1)1}&r_{(s_{i-1}+1)2}&0&0&\ldots&0&0\\r_{(s_{i-1}+2)1}&r_{(s_{i-1}+2)2}&r_{(s_{i-1}+2)3}&0&\ldots&0&0\\
&&&\ldots&&&\\
r_{(s_i-2)1}&r_{(s_i-2)2}&r_{(s_i-2)3}&r_{(s_i-2)4}&\ldots&r_{(s_i-2)(s_i-1)}&0\\
r_{(s_i-1)1}&r_{(s_i-1)2}&r_{(s_i-1)3}&r_{(s_i-1)4}&\ldots
&r_{(s_i-1)(s_i-1)}&r_{(s_i-1)s_i}\\
r_{s_i1}&r_{s_i2}&r_{s_i3}&r_{s_i4}&\ldots&r_{s_i(s_i-1)}&r_{s_is_i}
\end{array}\right),
\end{equation}
the elements of matrix $\hat{R}(t,x)$ which are in row $s_i$ (in
other words, a row which contains a control) are equal to
$r_{s_ij},$ and the other elements are equal to zero,
$r_{mj}=r_{mj}(t,x).\;$ We assume that functions $r_{mj}(t,x)$ are
\textit{unknown}, and we call such systems \textit{robust
systems}, see for ex. \cite[p. 173]{Pol}. We assume that the
functions $r_{mj}(t,x)$ satisfy an imposed constraints
\begin{equation}\label{const}
\max\limits_{1\leq j\leq m+1\leq
n_i,\;i=1,\ldots,r}|r_{mj}(t,x)|\leq \Delta.
\end{equation}
It is necessary to to find $\Delta$ and to construct \textit{a
bounded control} which steers an arbitrary initial point $x_0\in
Q$ to the origin \textit{in a finite time} for any perturbation
matrix $R(t,x)$ under condition (\ref{const}).

As a classical example of problem of this kind, we can mention the
problem of control over the motion of a cart over the surface with
an unknown bounded friction. The process of motion of this system
is described by the following equations
$$\left\{\begin{array}{l} \dot{x}_1=x_2,\\\dot{x}_2=r_{22}(t,x_1,x_2)x_2+u.\end{array}\right.$$
The term $r_{22}(t,x_1,x_2)x_2$ is sliding frictional force and
$r_{22}(t,x_1,x_2)$ is the coefficient of the nonlinear viscous
friction which is an unknown function and satisfies the constraint
$ |r_{22}(t,x_1,x_2)|\leq \Delta.$ The constraint under
consideration on $r_{22}(t,x_1,x_2)$ allow a "negative" friction.

The general approach to admissible control synthesis problem for
an arbitrary nonlinear autonomous control system has been given by
V.~I.~Korobov in \cite{korobov_1}. In the same paper an estimate
for the time of motion (settling-time function) from an arbitrary
initial point to the origin has been given. Recently, the problem
of finite-time stabilization has been formulated in several
different ways \cite{Kor2},\cite{Pol}-\cite{Su}. The article
\cite{Kor_Sk} describes a method for solving the feedback
synthesis problem for systems with multidimensional control and
without
 perturbations (i. e. $R(t,x){\equiv}0$). Moreover, in
this case the controllability function is the time of motion. In
\cite{kor_rev}, we have solved the robust synthesis problem for a
case with one perturbation and a scalar control. In \cite{umg},
the case when $R(t,x)=p(t,x)R,\;$ $K\equiv 0$ and the control is
scalar has been considered.

In \cite{cai}, an adaptive fuzzy finite-time control scheme has
been proposed for a class of nonlinear systems with unknown
nonlinearities. The proposed scheme can guarantee that states of
the closed-loop system converge to a small neighborhood of the
origin in finite time. The book \cite[p. 201]{Pol} deals with the
problem of robust stabilization for systems with constant affine
perturbations. In \cite{Polyakov}, the Lyapunov function method
has been suggested to analyze the finite-time stabilization of the
system $\dot{x}(t)=A_0x+B_0u(t)+d(t,x(t)),$ where $u(t)$ is a
scalar function and $d(t,x)$ is measurable and uniformly bounded
in the variable $t$ function. In \cite{Ovseevich,Polyakov}, the
finite-time stabilization conditions have been formulated in the
form of linear matrix inequalities. In \cite{Su}, the problem of
finite-time stabilization for the second order system of general
form (or double integrator) with a scalar control has been
considered.

First of all, we describe the conditions which the perturbations
$r_{mj}(t,x)$ must satisfy.

\begin{definition} By set of admissible perturbations $\mathcal{R}$ we denote a set of matrices $R(t,x)$ whose
elements are functions $r_{mj}(t,x):\;[0,+\infty)\times
Q\rightarrow\mathbb{R}$ such that the following conditions are
satisfied:

i) $r_{mj}(t,x)$ are continuous in variables $t$ and $x;$

ii) $\max\limits_{1\leq j\leq m+1\leq
n_i,\;i=1,\ldots,r}|r_{mj}(t,x)|\leq \Delta$ for all
$(t,x)\in[0,+\infty)\times Q;$

iii) in each domain $\overline{K}_1(\rho_2)=\{(t,x):\; 0\leq t<
+\infty,\;\|x\|\leq \rho_2\},\;$ the vector function $R(t,x)x$
satisfies the Lipschitz condition $$|R(t,x'')x''-R(t,x')x'|\leq
\ell_1(\rho_2)\|x''-x'\|.$$
\end{definition}

If $R(t,x)\equiv 0$, then (\ref{f_1}) is canonical system:
$\dot{x}=(A_0+K)x+B_0u.$ This concept has been introduced in
\cite{korobov_1} for the first time. Also  this system has been
called "chain of integrators system" (for second order system see
for ex. \cite{Bhat}). In the analyzed approach, this system plays
the key role because the solution of the synthesis problem for an
arbitrary linear system with a multidimensional control can be
reduced to the solution of the synthesis problem for the canonical
system \cite[p.~105]{Kor2}. The canonical system is completely
controllable. In \cite[Theorem 2.3]{Kor2};\cite{Kor_Sk} the
control $u(x)$ which solves the synthesis problem for the
canonical system is given.

\begin{definition}
The problem of finding such range of perturbations $r$ that the
trajectory $x(t)$ of the closed-loop system with the control
$u(x)$
\begin{equation}\label{poln}
\dot{x}=(A_0+K+R(t,x))x+B_0u(x),
\end{equation}
starting at an arbitrary initial point $x(0)=x_0\in Q,$ ends at
the origin at some finite time $T(x_0,\mathcal{R}),$ i. e.
$\lim\limits_{t\rightarrow T(x_0,\mathcal{R})}x(t)=0,$ is said to
be the local robust feedback synthesis.  If $Q=\mathbb{R}^n$, this
problem is called the global robust feedback synthesis.
\end{definition}

Obviously, if $r_{11}(t,x)\equiv0$ and $r_{12}(t,x)\equiv-1$  then
the first coordinate $x_1$ in (\ref{f_1}) is uncontrollable;
therefore, the problem will not be solvable for any value of
$\Delta.$

The paper is organized as follows. In Section 2, some basic
concepts of the  controllability function method are given.
Section 3 represents the main results. In Section 3.3 we consider
the problem of stopping the oscillations of the system of two
coupled pendulums.

\begin{center}
\section{Background: the Controllability Function Method}
\end{center}

In this Section we recall some basic concepts and some results of
the controllability function method \cite{Kor2,korobov_1}. Let us
consider a nonlinear system of the form
\begin{equation}\label{k5}\dot{x} = f(x,u),
\end{equation}
where  $x\in Q\subset \mathbb{R}^n$ and $u\in \Omega \subset
\mathbb{R}^r,$ moreover, $\Omega$ is such that $ 0\in
\;int\;\Omega,\;f(0,0){=}0.$

\begin{definition}
The problem of constructing a control of the form $u=u(x),$
$\;x\in Q$  is said to be the local feedback synthesis if:\\i) $
u(x)\in {\rm \Omega};\;$\\ii) the trajectory $x(t)$ of the
closed-loop system $\dot{x}=f(x,u(x)),$ starting at an arbitrary
initial point $x_0\in Q,$ ends at the origin at some finite time
$T(x_0).$ If $Q=\mathbb{R}^n,$ the problem is called the global
feedback synthesis.
\end{definition}

The sufficient conditions for solvability the feedback synthesis
for system (\ref{k5}) were formulated in \cite[Theorem 1.1]{Kor2}.

Let us describe one of possible approaches to the solution of the
feedback synthesis  for the canonical system \cite[Theorem
2.3]{Kor2};\cite{Kor_Sk}:
\begin{equation}\label{kan}
\dot{x}=(A_0+K)x+B_0u,
\end{equation}
where $x\in\mathbb{R}^n,\;$ $u\in \mathbb{R}^r$ is a control which
satisfies the constraint $\|u\|\leq 1.$ It should be noted that
system (\ref{f_1}) coincides with the completely controllable
system (\ref{kan}) when $R(t,x)\equiv 0.$ Let us set
\begin{equation}\label{deff}
F^{-1} =\int\limits_0^1 (1-t)e^{-A_0t}B_0B_0^{*}e^{-A_0^{*}t}dt.
\end{equation}

Let $D(\Theta)$ be a diagonal matrix of the form
\begin{equation}\label{defd}
D(\Theta)=\textrm{diag}(D_1(\Theta),\ldots,D_r(\Theta)),\quad\mbox{where}\quad
D_i(\Theta)=\textrm{diag}\left(\Theta^{-\frac{2n_i-2j+1}{2}}\right)_{j=1}^{n_i}.
\end{equation}

\begin{theorem}\label{thkor}\cite[Theorem
2.3]{Kor2};\cite{Kor_Sk}.  The controllability function
$\Theta=\Theta(x)$ is defined for $ x\neq 0$ as a unique positive
solution of the equation
\begin{equation}\label{k10}
2a_0\Theta=(D(\Theta)FD(\Theta)x,x),\
\end{equation}
where the constant $a_0$ satisfies the inequality
\begin{equation}\label{k11}
0<a_0\leq
\frac{2}{\|F^{-1}\|\cdot(\|B_0^*F\|+2\max\{c^{n_1},c\}\|B_0^*K\|)^2},
\end{equation}
besides the domain of solvability synthesis problem  is ellipsoid
of the form\linebreak $\;Q=\{x:\;\Theta(x)\leq c\}.$ At $x=0$ we
put $\Theta(0)=0.$

Then at the domain $Q$  the control
\begin{equation}\label{k12}
u(x)=-\left(\frac{1}{2}\;B_0^*D(\Theta(x))FD(\Theta(x))+B_0^*K\right)x
\end{equation}
solves the local feedback synthesis for system (\ref{kan}) and
satisfies the constraint $\|u(x)\|\leq 1.$ Moreover, in this case
the equation $\dot\Theta(x)=-1$ holds, i. e. the controllability
function $\Theta(x)$ equals to the time of motion from any initial
point $x\in Q$ to the origin.

In the case when $K\equiv 0,$ the synthesis is global.
\end{theorem}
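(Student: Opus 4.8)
The plan is to handle the four assertions in turn --- well-definedness of $\Theta(x)$, the bound $\|u(x)\|\le1$, the identity $\dot\Theta=-1$, and the global case --- with the whole argument resting on two scaling relations forced by the structure of $A_0$, $B_0$ and $D(\Theta)$ in (\ref{defd}): inspecting the exponents gives $D(\Theta)A_0D^{-1}(\Theta)=\Theta^{-1}A_0$ and $D(\Theta)B_0=\Theta^{-1/2}B_0$. Throughout I write $W(\Theta)=D(\Theta)FD(\Theta)$ and, for fixed $x$, substitute $v=D(\Theta)x$, so that the defining equation (\ref{k10}) reads $2a_0\Theta=(Fv,v)$.

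First I would settle existence and uniqueness. Set $\phi(\Theta)=(W(\Theta)x,x)=(Fv,v)$. Since every exponent in $D(\Theta)$ is negative, $\|v\|\to\infty$ as $\Theta\to0^+$ and $\|v\|\to0$ as $\Theta\to\infty$; with $F\succ0$ this forces $\phi(\Theta)\to+\infty$ and $\phi(\Theta)\to0$ respectively, while $2a_0\Theta$ runs from $0$ to $+\infty$, so a root exists. For uniqueness I would show $\phi'(\Theta)<0$: a short computation gives $\phi'(\Theta)=-\frac{2}{\Theta}(Fv,Gv)$ with $G=\mathrm{diag}\left(\frac{2n_i-2j+1}{2}\right)\succ0$, and the congruence $FG+GF=F(GF^{-1}+F^{-1}G)F$ reduces positivity of this form to $GF^{-1}+F^{-1}G\succ0$, which is a byproduct of the identity established below. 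The implicit function theorem then renders $\Theta(x)$ of class $C^1$ on $Q\setminus\{0\}$, and the estimate $\|x\|\le\|D^{-1}(\Theta)\|\,\|v\|$ shows $\Theta(x)\to0$ as $x\to0$, justifying $\Theta(0)=0$. For the control bound I would use $B_0^*D(\Theta)=\Theta^{-1/2}B_0^*$ and $(Fv,v)=2a_0\Theta$ to split (\ref{k12}) into $\frac12\Theta^{-1/2}B_0^*Fv$ and $B_0^*Kx=B_0^*KD^{-1}(\Theta)v$; from $\|v\|\le\sqrt{2a_0\Theta\|F^{-1}\|}$ the first term is at most $\frac12\|B_0^*F\|\sqrt{2a_0\|F^{-1}\|}$, and the column-scaling of $D^{-1}(\Theta)$ bounds the second by $\|B_0^*K\|\max\{c^{n_1},c\}\sqrt{2a_0\|F^{-1}\|}$ on $\Theta\le c$. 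Adding and squaring reproduces exactly the threshold (\ref{k11}), so $\|u(x)\|\le1$.

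The hard part is $\dot\Theta=-1$. Differentiating (\ref{k10}) along (\ref{kan}) gives $2a_0\dot\Theta=(W'x,x)\dot\Theta+2(Wx,\dot x)$. Because $K$ acts only in the control rows one checks $B_0B_0^*K=K$, so the $K$-contributions in $(Wx,\dot x)$ cancel and $(Wx,\dot x)=(Wx,A_0x)-\frac12\|B_0^*Wx\|^2$. Passing to $v=D(\Theta)x$ through the two scaling relations turns the required equality $\dot\Theta=-1$ into the single matrix identity $(A_0+\Lambda)F^{-1}+F^{-1}(A_0+\Lambda)^*=B_0B_0^*$, where $\Lambda=G+\frac12 I$. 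This is the main obstacle, and I would settle it directly from (\ref{deff}): integration by parts with the weight $1-t$ yields $A_0F^{-1}+F^{-1}A_0^*=B_0B_0^*-\int_0^1 M(t)\,dt$ with $M(t)=e^{-A_0t}B_0B_0^*e^{-A_0^*t}$, while differentiating the scaling formula $D(\Theta)F^{-1}D(\Theta)=\int_0^{1/\Theta}(1-\Theta s)M(s)\,ds$ at $\Theta=1$ gives $GF^{-1}+F^{-1}G=\int_0^1 sM(s)\,ds$. Combining them produces $\Lambda F^{-1}+F^{-1}\Lambda=\int_0^1 M(t)\,dt$, hence the identity; the same integral representation shows $GF^{-1}+F^{-1}G\succ0$ by controllability, supplying the positivity used above.

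Finally, from $\dot\Theta=-1$ we obtain $\Theta(x(t))=\Theta(x_0)-t$, so the trajectory reaches $\Theta=0$ at $T(x_0)=\Theta(x_0)$; since $\|x\|\le\|D^{-1}(\Theta)\|\,\|v\|\to0$ as $\Theta\to0$, the endpoint is the origin, which completes the finite-time synthesis on $Q=\{x:\Theta(x)\le c\}$. When $K\equiv0$ the factor $\|B_0^*K\|$ drops out of (\ref{k11}), so one fixed $a_0$ is admissible for every $c$; letting $c\to\infty$ exhausts $\mathbb{R}^n$, and the synthesis is global.
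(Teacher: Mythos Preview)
Your argument is correct and tracks the paper's own treatment closely. Note that the paper does not give a standalone proof of this theorem---it is quoted from \cite{Kor2,Kor_Sk}---but the ingredients appear scattered in Section~3: the change of variable $y=D(\Theta)x$, the scaling relations $D(\Theta)A_0D^{-1}(\Theta)=\Theta^{-1}A_0$ and $D(\Theta)B_0=\Theta^{-1/2}B_0$, and the computation leading to (\ref{f_4}) which, at $R\equiv0$, collapses to $\dot\Theta=-1$; the bound $\|u(x)\|\le1$ is carried out verbatim at the end of the proof of Theorem~\ref{thglob}, exactly as you do it.

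The one genuine difference is the handling of the key algebraic identity. The paper simply quotes
\[
FA_0+A_0^*F-FB_0B_0^*F=-F^1,\qquad F^1=F-FH-HF,
\]
from \cite[p.~109]{Kor2} and uses it as a black box; you instead derive the equivalent dual form $(A_0+\Lambda)F^{-1}+F^{-1}(A_0+\Lambda)^*=B_0B_0^*$ directly from the integral definition (\ref{deff}) by integration by parts together with the scaling $D(\Theta)F^{-1}D(\Theta)=\int_0^{1/\Theta}(1-\Theta s)M(s)\,ds$. This is a nice self-contained addition: it simultaneously yields $GF^{-1}+F^{-1}G=\int_0^1 sM(s)\,ds\succ0$, which you then recycle for the monotonicity/uniqueness step that the paper also merely cites from \cite[p.~108]{Kor2}. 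So your route is the same in spirit but strictly more complete than what the paper records.
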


\begin{center}
\section{The Solution of the Robust Feedback Synthesis}
\end{center}

Let us consider system (\ref{f_1}).Eq.  (\ref{poln}) with control
(\ref{k12}) takes the following form
$$(A_0+K+R(t,x))x+B_0u(x)=$$ $$=(A_0+K+R(t,x))x-\left(\frac{1}{2}\;
B_0B_0^*D(\Theta(x))FD(\Theta(x))+B_0B_0^*K\right)x.$$ Due to the
fact that $B_0B_0^*K=K,$ the last equation takes the form
$$(A_0+K+R(t,x))x+B_0u(x)=(A_0+R(t,x))x-\frac{1}{2}\;
B_0B_0^*D(\Theta(x))FD(\Theta(x)x.$$ Put $y(\Theta,x)=D(\Theta)x.$
Then Eq.  (\ref{k10}) takes the following form
 \begin{equation}\label{f_2}
2a_0\Theta=(Fy(\Theta,x),y(\Theta,x)).
\end{equation}
Let us set $$
H=\textrm{diag}(H_1,\ldots,H_r),\quad\mbox{where}\quad
H_i=\textrm{diag}\left(-\frac{2n_i-2j+1}{2}\right)_{j=1}^{n_i} $$
and
\begin{equation}\label{deff1}
F^1=F-FH-HF=((2n-i-j+2)f_{ij})_{i,j=1}^n.
\end{equation}
If the matrix $F$ is positive defined, then Eq.  (\ref{f_2}) has a
unique positive solution $\Theta=\Theta(y)$ \cite[p. 108]{Kor2}.
 Since the controllability function is the time of motion, then the matrix
 $F^1$ is positive defined \cite[p. 106]{Kor2}. Let the constant $a_0$ satisfies inequality (\ref{k11}). Let us
investigate the closed-loop system (\ref{poln}) with control given
by relation (\ref{k12}). Let us denote the trajectory of this
system by $x(t)$ and let us find the derivative with respect to
the system $\dot\Theta=\frac{d}{dt}\Theta(x(t))$. From Eq.
(\ref{f_2})  it follows that
 \begin{equation}\label{f_3}
2a_0\dot{\Theta}=(F\dot{y}(\Theta,x),y(\Theta,x))+(Fy
(\Theta,x),\dot{y}(\Theta,x)).
\end{equation}

Let us find $\dot{y}(\Theta,x)$. We obtain that
$\displaystyle\frac{d}{d\Theta}D(\Theta)=\frac{1}{\Theta}H
D(\Theta).$ Therefore, $$\dot y(\Theta,x)=\dot D(\Theta
)x+D(\Theta )\dot x= \frac{\dot\Theta
}{\Theta}Hy(\Theta,x)+D(\Theta)A_0D^{-1}(\Theta)y(\Theta,x)+$$
$$+D(\Theta)R(t,x)D^{-1}(\Theta)y(\Theta,x)-
\frac{1}{2}D(\Theta)B_0B_0^*D(\Theta)Fy(\Theta,x).$$ Let us set
\begin{equation}\label{defs}
S(\Theta,t,x)=\Theta(FD(\Theta)R(t,x)D^{-1}(\Theta)+D^{-1}(\Theta)R^*(t,x)D(\Theta)F).
\end{equation} In
\cite[p. 109]{Kor2} it was proved that
$$D(\Theta)A_0D^{-1}(\Theta)=\Theta^{-1}A_0,\quad
D(\Theta)b_0=\Theta^{-1/2}b_0,\quad FA_0+A_0^*F-FB_0B_0^*F=-F^1.$$
From (\ref{f_3}) we see that
$$\dot{\Theta}(2a_0-\frac{1}{\Theta}((FH+HF)y(\Theta,x),
y(\Theta,x)))=\frac{1}{\Theta}((-F^1+S(\Theta,t,x))
y(\Theta,x),y(\Theta,x)).$$ Taking into account Eq. (\ref{f_2}),
we obtain that the derivative of the controllability function with
respect to system (\ref{poln}) is of the form:
\begin{equation}\label{f_4}
\dot{\Theta}=-1+\frac{(S(\Theta,t,x) y(\Theta,x),y(\Theta,x))}
{(F^1 y(\Theta,x),y(\Theta,x))}.
\end{equation}

Let us introduce the following notation:\\
$\bullet$ $M^*$ is the transpose matrix to the matrix $M;$\\
$\bullet$ $\sigma(M)$  is the spectrum of matrix  $M;$\\
$\bullet$ $\lambda_{min}(M)=\min\{\lambda:\;\lambda\in
\sigma(M)\};$\\
$\bullet$ $\lambda_{max}(M)=\max\limits\{\lambda:\;\lambda\in
\sigma(M)\};$\\
$\bullet$ $\rho(M)=\max\{|\lambda|,\;\lambda\in
\sigma(M)\}$ is spectral radius of matrix $M;$\\
$\bullet$ $|M|=(|m_{ij}|)_{i,j=1}^n$  is the absolute value of
matrix $M,$ i. e. matrix which consists of
 absolute values of the  elements of matrix $M;$\\
$\bullet$ $\widetilde{G}=|{(F^1)}^{-1}|\cdot(F
\widetilde{R}+\widetilde{R}^*F),$ where the matrix $\widetilde{R}$
coincides with the  matrix $R(t,x)$ at $r_{mj}(t,x)=1.$

Let us set $y=y(\Theta,x).$ Let us find the exact estimate for
 $\dot\Theta.$ To this end we find the largest and
  smallest values of the ratio
$(S(\Theta,t,x)y,y)/(F^1 y,y)$ at $y\not=0.$  Let us consider the
problem
$$(S(\Theta,t,x)y,y)\rightarrow extr,\quad y\in\{y:\;(F^1 y,y)=c\}.$$
We solve this problem using the method of Lagrange multipliers.
The Lagrange function takes the form
$${\cal L}(y,\lambda)=(S(\Theta,t,x)y,y)-\lambda[(F^1 y,y)-c].$$
From the necessary condition of the extremum we obtain
that\linebreak $S(\Theta,t,x)y-\lambda F^1 y=0.$ So at the
extremum point the following condition holds:
$(S(\Theta,t,x)y,y)=\lambda(F^1 y,y),$ moreover
$\lambda\in\sigma({(F^1)}^{-1}S(\Theta,t,x)).$ Therefore,
$${\lambda}_{min}({(F^1)}^{-1}S(\Theta,t,x))\leq\frac{(S(\Theta,t,x)y,y)}
{(F^1 y,y)}\le {\lambda}_{max}({(F^1)}^{-1}S(\Theta,t,x)).$$ Thus,
from(\ref{f_4}) we obtain that
\begin{equation}\label{f_5}
\dot{\Theta}\leq-1+{\lambda}_{max}({(F^1)}^{-1}S(\Theta,t,x)).
\end{equation}

\subsection{Perturbations of the superdiagonal
elements} Suppose that the  $(n_i\times n_i)$ matrices $R_i(t,x)$
have nonzero elements only at the main superdiagonal and
$\hat{R}(t,x)\equiv 0.$ Then system (\ref{f_1}) has the following
form:
\begin{equation}\label{f_1n} \left\{\begin{array}{l}
\dot{x}_{s_{i-1}+j}=(1+r_{(s_{i-1}+j)(s_{i-1}+j+1)}(t,x))x_{s_{i-1}+j+1},\;j=1,\ldots,n_i-1,\\
\dot{x}_{s_i}=\sum\limits_{j=1}^nk_{s_ij}x_j +u_i,\;i=1,\ldots,r.
\end{array}\right.
\end{equation}

Similarly to \cite[p. 109]{Kor2}, one can show that
$D(\Theta)R(t,x)D^{-1}(\Theta)=\Theta^{-1}R(t,x)$  (by using the
fact that in the case under consideration the matrix $R(t,x)$ has
the same structure as $A_0$). So we obtain that
\begin{equation}\label{defs0}S(\Theta,t,x)=S_0(t,x)=FR(t,x)+R^*(t,x)F.\end{equation}
It should be noted that the matrix $S_0(t,x)$ does not depend on
$\Theta$. This observation is crucial for our method of solving
the robust feedback synthesis. Indeed, the explicit form of
$S_0(t,x)$ is $S_0(t,x)=\textrm{diag}\left(S_{1}(t,x), \ldots,
S_{r}(t,x)\right),\;$ where $S_i(t,x)=$
$$\left(\begin{array}{cccc}
0&f_{11}r_{12}&\ldots&f_{1(n_i-1)}r_{(n_i-1)n_i}\\
f_{11}r_{12}&2f_{12}r_{12}&\ldots&f_{1n_i}r_{12}+f_{2(n_i-1)}r_{(n_i-1)n_i}\\
f_{12}r_{23}&f_{13}r_{12}+f_{22}r_{23}&\ldots&f_{2n_i}r_{23}+f_{3(n_i-1)}r_{(n_i-1)n_i}\\
&\ldots&&\\
f_{1(n_i-1)}r_{(n_i-1)n_i}&f_{1n_i}r_{12}+f_{2(n_i-1)}r_{(n_i-1)n_i}&\ldots&2f_{(n_i-1)n_i}r_{(n_i-1)n_i}
\end{array}\right),$$
and $r_{mj}=r_{mj}(t,x).$

\begin{theorem}\label{thglob} Let
 $\gamma$ be an arbitrary number which satisfies the
inequality $0<\gamma<1.$ Let
\begin{equation}\label{f_7new}
\Delta=\frac{(1-\gamma)}{\rho (\widetilde{G})}.
\end{equation}
Let the controllability function $\Theta=\Theta(x),$ $ x\neq 0,$
be a unique positive solution of Eq.  (\ref{k10}), where the
constant $a_0$ satisfies  inequality (\ref{k11}).

Then at the domain $Q$ specified by the equality
$\;Q=\{x:\;\Theta(x)\leq c\}$ the control  given by relation
(\ref{k12}) solves the local robust feedback synthesis for system
(\ref{f_1n}). Moreover,  the trajectory $x(t)$ of the closed-loop
system (\ref{poln}), starting at an arbitrary initial point
$x(0)=x_0\in Q,$ ends at the origin at some finite time
$T(x_0,\mathcal{R})$ satisfying the estimate
\begin{equation}\label{oc}
T(x_0,\mathcal{R})\leq \frac{\Theta(x_0)}{\gamma}.\end{equation}

In the case when $K\equiv 0,$ the robust feedback synthesis is
global.
\end{theorem}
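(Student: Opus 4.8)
The plan is to show that the choice (\ref{f_7new}) of $\Delta$ forces the controllability function to decrease along every trajectory of the closed-loop system at a rate at least $\gamma$, i.e. $\dot\Theta\le-\gamma$, and then to read off both the finite-time convergence and the time estimate (\ref{oc}) from this single differential inequality. Since in the superdiagonal case the matrix $S(\Theta,t,x)$ reduces by (\ref{defs0}) to the $\Theta$-independent symmetric matrix $S_0(t,x)=FR(t,x)+R^{*}(t,x)F$, inequality (\ref{f_5}) already gives $\dot\Theta\le-1+\lambda_{max}((F^1)^{-1}S_0(t,x))$, so everything hinges on a uniform bound $\lambda_{max}((F^1)^{-1}S_0(t,x))\le 1-\gamma$ valid for all $(t,x)$ and all admissible perturbations.

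First I would establish this spectral bound. Because $F^1$ is symmetric positive definite and $S_0$ is symmetric, $(F^1)^{-1}S_0$ is similar to the symmetric matrix $(F^1)^{-1/2}S_0(F^1)^{-1/2}$ and hence has real spectrum, so $\lambda_{max}((F^1)^{-1}S_0)\le\rho((F^1)^{-1}S_0)$. I would then dominate entrywise: the constraint (\ref{const}) gives $|S_0|\le\Delta\,(F\widetilde R+\widetilde R^{*}F)$, where $\widetilde R$ is $R$ evaluated at $r_{mj}\equiv1$ and the right-hand matrix is nonnegative thanks to the sign structure of $F$. Combining $\rho(M)\le\rho(|M|)$, the submultiplicativity $|(F^1)^{-1}S_0|\le|(F^1)^{-1}|\,|S_0|$, and monotonicity of the spectral radius on nonnegative matrices yields
$$\lambda_{max}((F^1)^{-1}S_0)\le\rho\big(|(F^1)^{-1}|\cdot\Delta(F\widetilde R+\widetilde R^{*}F)\big)=\Delta\,\rho(\widetilde G)=1-\gamma,$$
where the last equality is exactly the definition (\ref{f_7new}) of $\Delta$. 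Substituting into (\ref{f_5}) gives $\dot\Theta\le-\gamma<0$ at every $x\in Q\setminus\{0\}$.

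Next I would settle existence, invariance and convergence. The right-hand side of (\ref{poln}) with the feedback (\ref{k12}) is continuous on $Q$ and locally Lipschitz on $Q\setminus\{0\}$: the term $R(t,x)x$ by condition iii) of Definition 1.1, and the feedback term because $\Theta(x)$ is smooth away from the origin (implicit function theorem applied to (\ref{k10})). Hence a unique trajectory exists. Since $\dot\Theta\le-\gamma$, the function $t\mapsto\Theta(x(t))$ is strictly decreasing, so the trajectory never leaves $Q=\{\Theta\le c\}$; in particular the bound $\|u(x)\|\le1$ from Theorem \ref{thkor} persists along the whole motion. Integrating $\dot\Theta\le-\gamma$ gives $\Theta(x(t))\le\Theta(x_0)-\gamma t$, which must reach $0$ at some $T(x_0,\mathcal R)\le\Theta(x_0)/\gamma$, establishing (\ref{oc}); and since $\Theta$ is positive definite and vanishes only at the origin, $\Theta(x(t))\to0$ forces $x(t)\to0$, i.e. the origin is reached in finite time. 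Finally, when $K\equiv0$ Theorem \ref{thkor} gives $Q=\mathbb R^n$ and the estimate $\dot\Theta\le-\gamma$ holds on all of $\mathbb R^n$, so the synthesis is global.

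The main obstacle is the uniform spectral estimate in the second paragraph: one must pass from the $(t,x)$- and perturbation-dependent matrix $(F^1)^{-1}S_0(t,x)$ to the single comparison matrix $\widetilde G$ without losing too much. The delicate points are that one must check the sign pattern of $F$ so that $F\widetilde R+\widetilde R^{*}F$ is genuinely nonnegative and dominates $|S_0|/\Delta$ entrywise, and that one must invoke Perron--Frobenius monotonicity of $\rho(\cdot)$ correctly, since $\lambda_{max}$ and $\rho$ have to be compared while the intermediate matrices are not symmetric. Once $\dot\Theta\le-\gamma$ is in hand, the remaining finite-time and invariance arguments are routine.
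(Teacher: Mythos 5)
Your overall strategy coincides with the paper's: reduce everything to the uniform bound $\lambda_{\max}((F^1)^{-1}S_0(t,x))\le\Delta\,\rho(\widetilde G)=1-\gamma$ via $\lambda_{\max}\le\rho$, the entrywise bound $|(F^1)^{-1}S_0|\le|(F^1)^{-1}|\,|S_0|\le\Delta\,\widetilde G$, and monotonicity of the spectral radius on nonnegative matrices (the paper cites \cite[Theorem 8.1.18]{Xorn} for exactly these two facts), then integrate $\dot\Theta\le-\gamma$ to get (\ref{oc}) and quote Theorem \ref{thkor} for $\|u\|\le1$. However, there is a genuine gap at the step you yourself flag as ``delicate'' and then do not carry out: the inequality $|S_0|=|FR+R^*F|\le\Delta\,(F\widetilde R+\widetilde R^*F)$ requires that \emph{all entries of $F$ are positive}. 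Entrywise, $|(FR)_{mj}|\le\sum_k|f_{mk}|\,|r_{kj}|\le\Delta\sum_k|f_{mk}|\,\widetilde r_{kj}$, and this equals $\Delta\,(F\widetilde R)_{mj}$ only if $|f_{mk}|=f_{mk}$; otherwise the correct comparison matrix would involve $|F|$ rather than $F$, and the constant $\Delta$ in (\ref{f_7new}), which is defined through $\widetilde G=|(F^1)^{-1}|\,(F\widetilde R+\widetilde R^*F)$, would not be justified.

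Establishing this positivity is in fact the bulk of the paper's proof and is not a routine check: $F^{-1}$ is block diagonal with blocks $F_i^{-1}$ given by the explicit formula (\ref{defsm}) with alternating signs $(-1)^{m+j}$, so one strips the signs to reduce to the matrix $\widetilde M=\bigl(\tfrac{1}{(2n_i-m-j+1)(2n_i-m-j+2)}\bigr)_{m,j=1}^{n_i}$ and must show that \emph{all} minors of $\widetilde M$ are positive (total positivity), so that the cofactor formula for $\widetilde M^{-1}$ yields the sign pattern $(-1)^{m+j}$ and hence $F_i>0$ entrywise. The paper does this by invoking the Fekete criterion (positivity of minors on consecutive rows and columns suffices), recognizing each consecutive submatrix as a Schur (Hadamard) product of two Cauchy matrices, computing Cauchy determinants via (\ref{koshi}), and applying the Schur product theorem. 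None of this machinery appears in your proposal, and without it the central estimate $\rho(|(F^1)^{-1}S_0|)\le\Delta\,\rho(\widetilde G)$ is unproven. The remaining parts of your argument (real spectrum of $(F^1)^{-1}S_0$, forward invariance of $Q$ from $\dot\Theta\le-\gamma$, the finite-time conclusion, and globality when $K\equiv0$) are sound and match the paper.
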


\begin{proof}  Since $B_0=\textrm{diag} \left(B_{01}, \ldots, B_{0r}\right),$ then the matrices $A_0$ and
$B_0$ have a block structure. So the matrix $F^{-1}$ given by
(\ref{deff}) is of the form
$$F^{-1}=\textrm{diag}(F^{-1}_1,\ldots,F^{-1}_r),$$
where (see \cite[p. 98]{Kor2})
\begin{equation}\label{defsm}
\begin{array}{c} F^{-1}_i=\int\limits_0^1
(1-t)e^{-A_{0i}t}B_{0i}B_{0i}^{*}e^{-A_{0i}^{*}t}dt=\\
\left(\displaystyle\frac{(-1)^{m+j}}{(n_i-m)!(n_i-j)!(2n_i-m-j+1)(2n_i-m-j+2)}\right)_{m,j=1}^{n_i}.
\end{array}
\end{equation}

Let us fix value of $i$ and consider the matrix $F_i$ which is
inverse to the matrix $F^{-1}_i.$ Let us prove that the elements
of the matrix $F_i$ are positive. To this end we analyze  the
matrix
$$\widetilde{M}=\left(\displaystyle\frac{1}{(2n_i-m-j+1)(2n_i-m-j+2)}\right)_{m,j=1}^{n_i}.$$
Put $d_m=(-1)^{m}(n_i-m)!$ The elements of the matrix $F^{-1}_i$
can be calculated from the elements of the matrix $\widetilde{M}$
by multiplying every element of row $m$ by $d_m$ and every element
of column $j$ by $d_j.$ It is known that if every element of row
$m$ of  the matrix is  multiplied by $\varepsilon\neq 0,$ then
every element of column $m$ in the inverse matrix will be divided
by $\varepsilon.$ A similar assertion is true for the columns.
Then, in order to determine the elements of the matrix $F_{i}$ we
should divide every element of column $m$ of the matrix
$\widetilde{M}^{-1}$ by $d_m,$ and every element of row $j$ of the
matrix $\widetilde{M}^{-1}$ by $d_j.$ Therefore, the element with
the number $mj$ will be divided by $d_md_j,$ $\textrm{sign}\;
d_md_j=(-1)^{m+j}.$

Let us prove that all the minors of the matrix $\widetilde{M}$ are
positive. It is known that all the minors of $n_i\times n_i$
matrix $\widetilde{M}$  are positive if its $s$ order minors
composed from consecutive $s$ rows and consecutive $s$ columns are
positive \cite[Theorem 3.3]{karlin}. This theorem was first proved
 in \cite{Fekete}. So in the matrix $\widetilde{M}$ we consider only
submatrices composed from  consecutive $s$ rows  $\bar{r}+1,
\bar{r}+2, \ldots, \bar{r}+s$ and consecutive $s$ columns
$\bar{c}+1, \bar{c}+2, \ldots, \bar{c}+s.$  In addition, any such
submatrix  is the  Schur product of the Cauchy matrices. A Cauchy
matrix is a matrix of the form $\left(
\displaystyle\frac{1}{x_m+y_j}\right)_{m,j=1}^n$ \cite[Theorem
1.2.12.1]{prasolov}. Each consecutive submatrix of the matrices
$\left(\displaystyle\frac{1}{2n_i-m-j+1}\right)_{m,j=1}^{n_i}$ and
$\left(\displaystyle\frac{1}{2n_i-m-j+2}\right)_{m,j=1}^{n_i}$ is
a Cauchy matrix (put for the first matrix
$x_m=n_i-m,\;y_j=n_i-j+1$). The determinant of the Cauchy matrix
is determined in \cite[Theorem 1.2.12.1]{prasolov} by the formula
\begin{equation}\label{koshi}
\displaystyle\frac{\prod\limits_{m>j}(x_m-x_j)(y_m-y_j)}{\prod\limits_{m,j}(x_m+y_j)}.
\end{equation}
Each consecutive submatrix of the matrices
$\left(\displaystyle\frac{1}{2n_i-m-j+1}\right)_{m,j=1}^{n_i}$ and
$\left(\displaystyle\frac{1}{2n_i-m-j+2}\right)_{m,j=1}^{n_i}$ is
a positive definite matrix due to the Silvester criteria and
formula (\ref{koshi}). The Schur product of the matrices
$\left(\displaystyle\frac{1}{2n_i-m-j+1}\right)_{m,j=1}^{n_i}$ and
$\left(\displaystyle\frac{1}{2n_i-m-j+2}\right)_{m,j=1}^{n_i}$
 is the matrix of the form
 $$\left(\displaystyle\frac{1}{(2n_i-m-j+1)(2n_i-m-j+2)}\right)_{m,j=1}^{n_i}$$
 and it is equal to $\widetilde{M}.$
The Schur product of positive definite matrices is a positive
definite matrix \cite[Theorem 6.4.2.1]{prasolov}. Hence, in the
matrix $\widetilde{M}$ all the submatrices composed from
consecutive $s$ rows  $\bar{r}+1, \bar{r}+2, \ldots, \bar{r}+s$
and consecutive $s$ columns  $\bar{c}+1, \bar{c}+2, \ldots,
\bar{c}+s$ are positive. Therefore, all minors of the matrix
$\widetilde{M}$ are positive. Then the minors of the order $n_i-1$
and $n_i,$ in particular, are also positive. Hence, the elements
of the matrix inverse to the matrix $\widetilde{M}$ have the sign
$(-1)^{m+j}.$ This implies that all the elements of the matrix
$F_i$ inverse to matrix $F^{-1}_i$ are positive.

It is known that ${\lambda}_{max}({(F^1)}^{-1}S_0(t,x))\leq \rho
({(F^1)}^{-1}S_0(t,x)).$ We claim that $\rho
({(F^1)}^{-1}S_0(t,x))\leq \rho |{(F^1)}^{-1}S_0(t,x)|.$  To prove
this inequality we need the following Theorem.
\begin{theorem}\cite[Theorem 8.1.18]{Xorn} Let $M$ and $N$ be some matrices.
Then\\1. $|M\cdot N|\leq |M|\cdot |N|;\quad$\\ 2. If $|M|\leq N,$
then $\rho(M)\leq \rho(|M|)\leq\rho(N).$
\end{theorem}

Therefore $\rho ({(F^1)}^{-1}S_0(t,x))\leq \rho
|{(F^1)}^{-1}S_0(t,x)|\leq \Delta \rho (\widetilde{G}).$ Here we
use the fact that the elements of the matrix $F$ are positive.
 Let us  substitute the last inequality into inequality (\ref{f_5}). We obtain that
\begin{equation}\label{f_6}
\dot{\Theta}\leq-1+\Delta \rho(\widetilde{G}).
\end{equation}

If we assume that $-1+\Delta \rho (\widetilde{G})\leq -\gamma,$
then $\dot{\Theta}\leq -\gamma.$ Similarly to \cite[Theorem
1.2]{Kor2}, the estimate on the time of motion (\ref{oc}) follows
from  the last inequality.

To complete the proof of the theorem, boundedness of the control
has to be established. Since
$B_0^*D(\Theta)=\Theta^{-\frac{1}{2}}B_0^*,$ the control given by
(\ref{k12}) can be rewritten in the form
$$u(x)=-\left(\frac{\Theta^{-\frac{1}{2}}}{2}\;B_0^*F+B_0^*KD^{-1}(\Theta(x))\right)y(\Theta,x)$$
Since $\|y(\Theta,x)\|^2\le 2a_0\Theta(x)\|F^{-1}\|$ and
$$
\|D^{-1}(\Theta(x))\|= \left\{\begin{array}{ll} \displaystyle
\Theta^\frac{1}{2}&\mbox{if}\;\;0<\Theta<1,\\
\displaystyle \Theta^\frac{2n_1-1}{2}&\mbox{if}\;\;\Theta\ge 1,
\end{array}\right.$$
at $\Theta(x)\leq c$ we get
$$ \|u (x)\|\leq \left(\frac{1}{2}\|B_0^*F\|+\max\{c^{n_1},c\}\|B_0^*K\|\right)
\sqrt{2a_0\|F ^{-1}\|}.$$ Let the constant $a_0$ satisfy
inequality (\ref{k11}). Then from  the last inequality we obtain
that $\;\|u (x)\|\le 1\;$ for all $x\in Q.$ Due to \cite[Theorem
2.3]{Kor2} the control $u(x)$ of the form (\ref{k12}) solves the
local feedback synthesis for system (\ref{f_1n}). The proof of
theorem is completed.
\end{proof}

\subsection{The general case} Let the matrix $R(t,x)$ has the form
given in (\ref{newr}). Then the elements of $S(\Theta,t,x)$
defined by relation (\ref{defs}) are  polynomials in $\Theta$
whose degree does not exceed $n_1.$ Reasoning similarly to the
case with the perturbations of the superdiagonal elements, from
inequality (\ref{f_5}) it follows that
$$
\dot{\Theta}\leq-1+\rho ({(F^1)}^{-1}S(\Theta,t,x))\leq-1+\Delta
\max\{c^{n_1},c\}\cdot\rho (\widetilde{G})
$$
at $\Theta(x)\leq c.$ If we assume that
\begin{equation}\label{dottet}
 -1+\Delta
\max\{c^{n_1},c\}\cdot\rho (\widetilde{G})\leq -\gamma,
\end{equation}
 then $\dot{\Theta}\leq -\gamma.$ Thus, the
following theorem is valid.

\begin{theorem}\label{thloc}
Let the controllability function $\Theta=\Theta(x),$ $ x\neq 0,$
be a unique positive solution of Eq.  (\ref{k10}), where the
constant $a_0$ satisfies  inequality (\ref{k11}). Let the
solvability domain be defined by $\;Q=\{x:\;\Theta(x)\leq c\},$
where $Q$ is ellipsoid. Let  $\gamma$ be an arbitrary number which
satisfies the inequality\linebreak $0<\gamma<1.$ Let
\begin{equation}\label{f_7}
\Delta=\frac{(1-\gamma)}{\max\{c^{n_1},c\}\cdot\rho
(\widetilde{G})}.
\end{equation}

Then in the domain $Q$ the control given by  relation (\ref{k12})
solves the local robust feedback synthesis for system (\ref{f_1}).
Moreover, the trajectory $x(t)$ of the closed-loop system
(\ref{poln}), starting at an arbitrary initial point $x(0)=x_0\in
Q,$ ends at the origin at some finite time $T(x_0,\mathcal{R}),$
where the time of motion $T(x_0,\mathcal{R})$ satisfies inequality
(\ref{oc}).
\end{theorem}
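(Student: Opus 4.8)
The plan is to adapt the argument of Theorem~\ref{thglob} to the general perturbation structure~(\ref{newr}), where the key new difficulty is that $S(\Theta,t,x)$ defined by~(\ref{defs}) now genuinely depends on $\Theta$ through the powers arising from $D(\Theta)R(t,x)D^{-1}(\Theta)$. First I would revisit the derivation of~(\ref{f_4}) and~(\ref{f_5}), which hold without any assumption on the structure of $R$, so inequality~(\ref{f_5}) is available verbatim. The task is therefore reduced to bounding $\lambda_{max}\bigl((F^1)^{-1}S(\Theta,t,x)\bigr)$ uniformly over $(t,x)$ with $\Theta(x)\le c$ and over all admissible perturbations satisfying~(\ref{const}).

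Next I would carry out the $\Theta$-dependence bookkeeping. In $S(\Theta,t,x)=\Theta\bigl(FD(\Theta)RD^{-1}(\Theta)+D^{-1}(\Theta)R^*D(\Theta)F\bigr)$, the conjugation $D(\Theta)RD^{-1}(\Theta)$ multiplies the entry $r_{mj}$ by $\Theta^{-(2n_i-2m+1)/2+(2n_i-2j+1)/2}=\Theta^{j-m}$; the outer factor $\Theta$ then produces, in each entry, a power of $\Theta$ whose exponent ranges between $0$ and at most $n_1$ (this is exactly the claim in Subsection~3.2 that the entries of $S(\Theta,t,x)$ are polynomials in $\Theta$ of degree at most $n_1$). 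Using the sign pattern established in the proof of Theorem~\ref{thglob} (all entries of $F$ positive, and the entries of $(F^1)^{-1}$ having sign $(-1)^{m+j}$), I would pass to absolute values via Theorem~2.2 of the excerpt (the Horn inequalities), giving $\rho\bigl((F^1)^{-1}S(\Theta,t,x)\bigr)\le\rho\,|(F^1)^{-1}S(\Theta,t,x)|$. Collecting the $\Theta$-powers and using $\Theta\le c$ together with the entrywise bound $|r_{mj}|\le\Delta$, each power $\Theta^p$ with $0\le p\le n_1$ is dominated by $\max\{c^{n_1},c\}$, so that $\rho\bigl((F^1)^{-1}S(\Theta,t,x)\bigr)\le\Delta\,\max\{c^{n_1},c\}\,\rho(\widetilde{G})$, where $\widetilde{G}$ is the matrix built from $R$ at $r_{mj}=1$. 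Substituting into~(\ref{f_5}) yields exactly the displayed estimate $\dot\Theta\le-1+\Delta\max\{c^{n_1},c\}\rho(\widetilde{G})$ on $Q$.

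With that differential inequality in hand, the choice of $\Delta$ in~(\ref{f_7}) forces $-1+\Delta\max\{c^{n_1},c\}\rho(\widetilde{G})=-1+(1-\gamma)=-\gamma$, so $\dot\Theta\le-\gamma<0$ along every trajectory of~(\ref{poln}) in $Q$. As in Theorem~\ref{thglob} and in \cite[Theorem~1.2]{Kor2}, integrating this inequality shows that $\Theta(x(t))$ decreases to zero in a time not exceeding $\Theta(x_0)/\gamma$, which gives the settling-time estimate~(\ref{oc}); strict monotone decrease also guarantees the trajectory stays inside the invariant ellipsoid $Q$, so existence and uniqueness of the solution follow from condition~iii) of Definition~1.1 (the Lipschitz property of $R(t,x)x$). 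Finally, the boundedness $\|u(x)\|\le1$ is not affected by the perturbation at all: the control~(\ref{k12}) and the bound derived at the end of the proof of Theorem~\ref{thglob} depend only on $F$, $K$, $c$, and $a_0$ through~(\ref{k11}), so that estimate transfers unchanged.

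The main obstacle I anticipate is the uniformity of the spectral-radius bound in $\Theta$: because $S(\Theta,t,x)$ is now a genuine matrix polynomial in $\Theta$ rather than the constant $S_0(t,x)$ of~(\ref{defs0}), one must verify that passing to $|S(\Theta,t,x)|$ and then bounding each entry's $\Theta$-power by $\max\{c^{n_1},c\}$ does not disturb the entrywise comparison $|(F^1)^{-1}S(\Theta,t,x)|\le\Delta\max\{c^{n_1},c\}\,|(F^1)^{-1}|(F\widetilde{R}+\widetilde{R}^{*}F)$ needed to invoke part~2 of Theorem~2.2. This requires that the monomial $\Theta^{p}$ multiplying each entry be replaceable by its supremum \emph{entrywise and with the correct sign}, which is exactly where the positivity of $F$ and the fixed sign pattern of $(F^1)^{-1}$ proved in Theorem~\ref{thglob} are essential; once that comparison is justified, the rest is the routine integration already used in the superdiagonal case.
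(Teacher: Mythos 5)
Your proposal follows the paper's own route for Theorem~\ref{thloc} exactly: the entries of $S(\Theta,t,x)$ are polynomials in $\Theta$ of degree at most $n_1$, each monomial $\Theta^p$ is bounded on $Q$ by $\max\{c^{n_1},c\}$, the positivity of $F$ and the sign pattern of $(F^1)^{-1}$ from Theorem~\ref{thglob} give $\rho\bigl((F^1)^{-1}S(\Theta,t,x)\bigr)\le\Delta\max\{c^{n_1},c\}\,\rho(\widetilde{G})$, and the choice (\ref{f_7}) yields $\dot\Theta\le-\gamma$, hence (\ref{oc}), with the control bound unchanged. One small slip worth fixing: the conjugation exponent is $\Theta^{m-j}$, not $\Theta^{j-m}$, so entry $(m,j)$ of $S$ carries $\Theta^{1+m-j}$ with $0\le 1+m-j\le n_1$ by the lower-Hessenberg structure of $R_i$ (your formula would make the superdiagonal case $\Theta$-dependent, contradicting (\ref{defs0})); your stated conclusion about the degrees is nevertheless correct.
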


\begin{remark}
{\rm If we solve inequality (\ref{dottet}) with respect to $c$ and
consider $\Delta$ to be arbitrary, then we obtain the following
solvability domain of the synthesis problem:
$\;Q=\{x:\;\Theta(x)\leq c\}.$}
\end{remark}

\begin{remark}
{\rm Value of $\Delta$ is monotonically decreasing in $\gamma.$ In
addition, the inequality for the time of motion
$T(x_0,\mathcal{R})$ given by (\ref{oc}) is also monotonically
decreasing in $\gamma.$ The value $\Delta\rightarrow\max$ at
$\gamma\rightarrow 0.$ Moreover $T(x_0,\mathcal{R})\rightarrow
+\infty$  at $\Delta\rightarrow 0.$}
\end{remark}

\begin{remark}
{\rm Let $R(t,x)\in\mathcal{R}.$ To determine the trajectory
starting at a given initial point $x_0\in Q$ we act as follows. We
 solve Eq.  (\ref{k10}) at $x=x_0$ and
find its unique positive root $\Theta(x_0)=\Theta_0.$ Put
$\theta(t)=\Theta(x(t)).$ The trajectory satisfies the following
system:
\begin{equation}\label{Coshi}
\left\{\begin{array}{l}
\dot{x}=(A_0+R(t,x))x-\frac{1}{2}\;B_0^*D(\theta(x))FD(\theta(x))x,\\[7pt]
\dot{\theta}=\displaystyle\frac{(-F^1+S(\Theta,t,x))
D(\theta)x,D(\theta)x)} {(F^1
D(\theta)x,D(\theta)x)},\\[10pt]
x(0)=x_0,\;\theta(0)=\Theta_0.
\end{array}\right.
\end{equation}
It should be noted that in order to determine $\Theta_0$ it
suffices to solve Eq.  (\ref{k10}) only once.}
\end{remark}

\subsection{Stopping the oscillations of the system of two
coupled pendulums} Let us consider a mechanical system which
consists of two pendulums coupled by a spring. Pendulums oscillate
in the same plane. We denote by $l_1$ and $l_2$ lengthes of
pendulums and $m_1$ and $m_2$ theirs masses.  The lengthes from
the suspension points  of two pendulums to the spring attachment
points are considered to be equal to each other and we denote them
by $h.$ The spring  stiffness is equal to $k.$  Oscillations of
this system without   a control were considered in many books (see
for ex. \cite[Sect. 6.1]{migulin}, \cite[Sect. 132]{strelkov}).

\begin{figure}[h!]
\begin{center}
\includegraphics[scale=0.5]{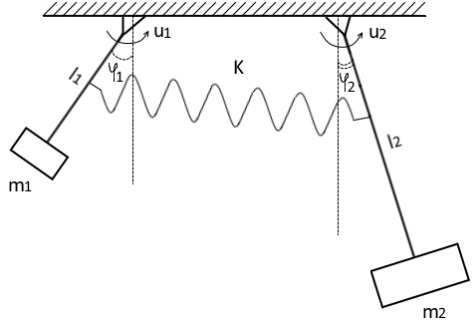}
\caption{A system which consist of two coupled
pendulums}\label{ris2}
\end{center}
\end{figure}

Let us consider the controllable motion of this system. Pairs of
forces $u_1$ and $u_2$ act as shown in Fig. \ref{ris2}. The
linearized equations of the motion of this pendulums are of the
form:
\begin{equation}\label{pr2}
\left\{\begin{array}{l}
\ddot{\varphi}_1=-\displaystyle\frac{m_1gl_1+kh^2}{m_1l_1^2}\;\varphi_1
+\frac{kh^2}{m_1l_1^2}\;\varphi_2+u_1,\\[10pt]
 \ddot{\varphi}_2=\displaystyle\frac{kh^2}{m_2l_2^2}\;\varphi_1-\frac{m_2gl_2+kh^2}{m_2l_2^2}\;\varphi_2+u_2.
\end{array}\right.
\end{equation}

Pairs of forces $u_1$ and $u_2$ satisfy the inequality
$\|(u_1,u_2)^*\|=\sqrt{u_1^2+u_2^2}\le1.$ We assume that positive
value of $u_i$ corresponds the case then moments of the force acts
in a clockwise direction. The force act tangentially to trajectory
of motion.

\textit{The first case.} Suppose that the values of
$m_1,$ $m_2,$ $l_1,$ $l_2$ and $h$ are
known. Suppose that the spring stiffness $k$  is unknown. Let us set
$$\displaystyle\frac{kh^2}{m_1l_1^2}=r_{21},\;\displaystyle\frac{kh^2}{m_2l_2^2}=r_{41},\;
\displaystyle\frac{g}{l_1}=k_{21},\;\displaystyle\frac{g}{l_2}=k_{43}.$$
By changing the variables
$$
x_1 = \varphi_1,\quad x_2 = \dot \varphi_1,\quad x_3 =
\varphi_2,\quad x_4 = \dot \varphi_2 $$ system (\ref{pr2}) is
reduced to the following form:
\begin{equation}\label{pr3n}
\left\{\begin{array}{l} \dot x_1 = x_2, \\
\dot x_2 = -(r_{21}+k_{21}) x_1 +r_{21}x_3+ u_1, \\
\dot x_3 = x_4, \\
\dot x_4 = r_{41}x_1 -(r_{41}+k_{43})x_3+ u_2.
\end{array}\right.
\end{equation}
The coefficients $r_{21}$ and $r_{41}$ are unknown constants.

System (\ref{pr3n}) can be written in the matrix form:
\begin{equation}\label{pr4}
\dot x = (A_0+K+R)x + B_0u ,
\end{equation}
where
\begin{equation}\label{defa}A_0=\left(
\begin{array}{cccc}
 0 & 1 & 0 & 0 \\
 0 & 0& 0 & 0 \\
 0 & 0 & 0 & 1 \\
0 & 0& 0 & 0 \\
\end{array}\right),\;
B_0=\left(
\begin{array}{cc}
 0 & 0 \\
 1 & 0 \\
 0 & 0 \\
 0 & 1
\end{array}
\right),
\end{equation}
$$ K=\left(
\begin{array}{cccc}
 0 & 0 & 0 & 0 \\
 -k_{21} & 0& 0 & 0 \\
 0 & 0 & 0 & 0 \\
0 & 0& -k_{43} & 0
\end{array}\right),\;
R=\left(
\begin{array}{cccc}
 0 & 0 & 0 & 0 \\
 -r_{21} & 0& r_{21} & 0 \\
 0 & 0 & 0 & 0 \\
r_{41} & 0& -r_{41} & 0
\end{array}\right),\;$$ and $n_1=2\;,n_2=2\;,s_1=2\;,s_2=n=4.$

Let us consider the robust feedback synthesis for system
(\ref{pr4}). Since for any fixed stiffness $k$ the following
equation holds: $rg (B_0,(A_0+K+R)B_0)=4,$ then this system is
completely controllable.

The matrices $F$ and $D(\Theta)$ given by relations (\ref{deff})
and (\ref{defd}) correspondingly are of the following form:
\begin{equation}\label{deffd} F= \left(\begin{array}{cccc} 36&12&0&0\\
12&6&0&0\\
0&0&36&12\\
0&0&12&6
\end{array}\right), \quad
D(\Theta)= \left(\begin{array}{cccc}
\Theta^{-\frac{3}{2}}&0&0&0\\[5pt]
0&\Theta^{-\frac{1}{2}}&0&0\\[5pt]
0&0&\Theta^{-\frac{3}{2}}&0\\[5pt]
0&0&0&\Theta^{-\frac{1}{2}}
\end{array}\right).
\end{equation}

Let $x=(x_1,x_2,x_3,x_4)\neq 0$ and determine the controllability
function $\Theta=\Theta(x)$ as a unique positive solution of Eq.
(\ref{k10}). In the analyzed case, this equation takes the form:
\begin{equation}\label{pr5}
2a_0\Theta^4=36x_1^2+24\Theta
x_1x_2+6\Theta^2x_2^2+36x_3^2+24\Theta x_3x_4+6\Theta^2x_4^2.
\end{equation}
At $x=0$ we put $\Theta(0)=0.$ We consider the solution of the
robust feedback synthesis in the ellipsoid
$\;Q=\{x:\;\Theta(x)\leq c\}.$ The constant $c>0$ is defined
below. The constant $a_0$ satisfies inequality (\ref{k11}) which
takes the form:
\begin{equation}\label{pr6n}
0<a_0\leq
\frac{3.58}{(13.42+2\max\{c^2,c\}\max\{k_{21},k_{43}\})^2}.
\end{equation}
In order to the solvability domain contains the ellipsoid of the
largest size, we choose $a_0$ as the largest value which satisfies
(\ref{pr6n}).

The control given by relation (\ref{k12}) which solves the robust
feedback synthesis is of the following form:
$$
u(x)=\left(\begin{array}{l}u_1(x)\\u_2(x)
\end{array}\right)=\left(\begin{array}{l}
-\displaystyle\frac{6x_1}{\Theta^2(x)}-\displaystyle\frac{3x_2}{\Theta(x)}+k_{21}x_1\\
-\displaystyle\frac{6x_3}{\Theta^2(x)}-\displaystyle\frac{3x_4}{\Theta(x)}+k_{43}x_3
\end{array}\right),
$$ where $\Theta=\Theta(x)$ is a unique positive solution of Eq.
(\ref{pr5}). For any value of $k$ this control steers an arbitrary
initial point $x_0$ to the origin in some finite time
$T(x_0,k)\leq \Theta(x_0)/\gamma,$ where $\gamma$ is an arbitrary
number which satisfies the inequality $0<\gamma<1.$

The matrix $S=S(\Theta,t,x)$ given by relation (\ref{defs}) is of
the form:
$$S(\Theta)= \left(\begin{array}{cccc}
-24r_{21}\Theta^2&-6r_{21}\Theta^2&12(r_{21}+r_{41})\Theta^2&6r_{41}\Theta^2\\
-6r_{21}\Theta^2&0&6r_{21}\Theta^2&0\\
12(r_{21}+r_{41})\Theta^2&6r_{21}\Theta^2&-24r_{41}\Theta^2&-6r_{41}\Theta^2\\
6r_{41}\Theta^2&0&-6r_{41}\Theta^2&0
\end{array}\right),
$$ where $\Theta=\Theta(x)$ is a unique positive solution of Eq.  (\ref{pr5}).

Let us find an estimate for the solvability domain. To this end we
find  $c$ from inequality (\ref{dottet}), which takes the form
\begin{equation}\label{pr8}
-1+\Delta \max\{c^2,c\}\cdot\rho (\widetilde{G})\leq -\gamma,
\end{equation}
where $\widetilde{G}= \left(\begin{array}{cccc}
 \displaystyle\frac{7}{6} & \displaystyle\frac{1}{6} & \displaystyle\frac{7}{6} & \displaystyle\frac{1}{6} \\[8pt]
 4 & \displaystyle\frac{1}{2}& 4 & \displaystyle\frac{1}{2} \\[8pt]
 \displaystyle\frac{7}{6}&\displaystyle\frac{1}{6}&\displaystyle\frac{7}{6} & \displaystyle\frac{1}{6}  \\[8pt]
4 & \displaystyle\frac{1}{2}&  4 & \displaystyle\frac{1}{2}
\end{array}\right),\;$
$\rho (\widetilde{G})\approx 8.4,\;$ $\Delta=k\cdot
\max\left\{\displaystyle\frac{h^2}{m_1l_1^2};\;\displaystyle\frac{h^2}{m_2l_2^2}\right\}.$\\
From (\ref{pr8}) it follows that
$$\max\{c^2,c\}\leq\displaystyle\frac{0.12(1-\gamma)}{r}=
\displaystyle\frac{0.12(1-\gamma)}{k
\max\left\{\displaystyle\frac{h^2}{m_1l_1^2};\;\displaystyle\frac{h^2}{m_2l_2^2}\right\}}.$$

Taking into account the inequality (\ref{f_5}), let us find a more
precise estimate for $c.$ At $x\in Q$ from (\ref{f_5}) it follows
that
$$\dot{\Theta}\leq-1+{\lambda}_{max}({(F^1)}^{-1}S(\Theta))=-1+\displaystyle\frac{\left(r_{21}+r_{41}+
2\sqrt{2(r_{21}^2+r_{41}^2)}\right) \Theta^2}{6}\leq$$
$$\leq-1+\displaystyle\frac{\left(r_{21}+r_{41}+
2\sqrt{2(r_{21}^2+r_{41}^2)}\right) c^2}{6}.$$ Let $c>0$ be such
that the following inequality holds:
\begin{equation}\label{pr9n}
-1+\displaystyle\frac{\left(r_{21}+r_{41}+
2\sqrt{2(r_{21}^2+r_{41}^2)}\right) c^2}{6}\leq -\gamma.\;
\end{equation}
Then $\dot{\Theta}\leq -\gamma.$ From (\ref{pr9n}) it follows that
$c\leq\sqrt{\frac{6(1-\gamma)}{(r_{21}+r_{41}+
2\sqrt{2(r_{21}^2+r_{41}^2)}}}.$ In order to solvability domain
contains the ellipsoid of the largest size, we choose $c$ as the
largest value which satisfies (\ref{pr9n}). So, we obtain the
following solvability domain:
\begin{equation}\label{pr10n}
Q=\left\{x:\;\Theta(x)\leq
\sqrt{\frac{6(1-\gamma)}{k\left(\frac{h^2}{m_1l_1^2}+\frac{h^2}{m_2l_2^2}+
2\sqrt{\frac{2h^4}{m_1^2l_1^4}+\frac{2h^4}{m_2^2l_2^4}}\right)}}\right\}.
\end{equation}

Let us consider the values of the parameters
$$m_1=1,\;m_2=2,\;l_1=60,\;l_2=30,\;h=7.5,\;\gamma=0.001.$$
Then $\displaystyle\frac{h}{l_1}=\displaystyle\frac{1}{8},\;
\displaystyle\frac{h}{l_2}=\displaystyle\frac{1}{4},\;$
$k_{21}=\displaystyle\frac{g}{l_1}\approx
0.16,\;k_{43}=\displaystyle\frac{g}{l_2}\approx
0.32,\;r_{21}=\displaystyle\frac{k}{64},\;r_{41}=\displaystyle\frac{k}{32}.$

Let the stiffness $k$ satisfies the constraint  $k\leq 4,$ but the
value of $k$ is unknown. Then the set of points (\ref{pr10n}) from
which we may steer to the origin  is the ellipsoid of the form
$Q=\{x:\;\Theta(x)\leq 3.2\}.$ Besides from (\ref{pr10n}) it
follows that the stiffness $k$ decreases as values of axes of
ellipsoid $Q$ increases. At $c=3.2$ inequality (\ref{pr6n}) on
$a_0$ takes the form: $a_0\leq 0.0088\ldots$ Put $a_0=0.0088.$

Let the initial point be equal to $x(0)=(-0.3,0.3,0,0),\;x(0)\in
Q.$ The unique positive solution $\Theta_0$ of Eq. (\ref{pr5})
$\Theta_0\approx3.2$. Let $x=x(t,k_0)$ be the trajectory of system
(\ref{Coshi}), which is realized at some coefficient of stiffness
$k_0$ which satisfies inequality $k_0\leq 4.$ Put
$\theta(t)=\Theta(x(t,k_0)).$ The trajectory $x=x(t,k_0)$
satisfies the following  system:
\begin{equation}\label{pr_11n}
\left\{\begin{array}{l} \dot{x}_1=x_2,\\[8pt]
\dot{x}_2=\displaystyle\frac{k_0}{64}(-x_1+x_3)-\displaystyle\frac{6\;x_1}{\theta^2}-\displaystyle\frac{3x_2}{\theta},\\[8pt]
\dot{x}_3=x_4,\\[8pt]
\dot{x}_2=\displaystyle\frac{k_0}{32}(x_1-x_3)-\displaystyle\frac{6\;x_1}{\theta^2}-\displaystyle\frac{3x_2}{\theta},\\[8pt]
\dot{\theta}=\phi,\\
x_1(0)=-0.3,\quad x_2(0)=0.3,\quad x_2(0)=0,\quad
x_2(0)=0,\quad\theta(0)=3.2,
\end{array}\right.
\end{equation}
where
$$\;\begin{array}{ll}
\phi=&-((12+0.03\;k_0\;\theta^2)\;x_1^2+
(6+0.02\;k_0\;\theta^2)\;x_1\;x_2\;\theta+x_2^2\;\theta^2+\\
&+(12+0.06\;k_0\;\theta^2)\;x_3^2+
(6+0.03\;k_0\;\theta^2)\;x_3\;x_4\;\theta+x_4^2\;\theta^2-\\
&-0.09\;k_0\;x_1\;x_3\;\theta^2
-0.02\;k_0\;x_2\;x_3\;\theta^3-0.03\;k_0\;x_1\;x_4\;\theta^3)/\\
&/(12\;x_1^2+6x_1\;x_2\;\theta+x_2^2\;\theta^2+12\;x_3^2+6x_3\;x_4\;\theta+x_4^2\;\theta^2).
\end{array}$$

The  two-dimensional projection of domain $Q$ on the plane
$Ox_1x_2=O\varphi_1\dot{\varphi}_1$ or equally
$\bar{Q}=\{(x_1^0,x_2^0,0,0):\;\Theta(x_1^0,x_2^0,0,0)\leq 3.2\}$
is given in Fig.~\ref{ris5}. Let
$(x_1^0(t),x_2^0(t),x_3^0(t),x_4^0(t),\theta(t))$ be the solution
of system (\ref{pr_11n}) at $k_0=4.$ The curve
$(x_1^0(t),x_2^0(t))$ is also given in Fig. \ref{ris5} (the solid
 line). The curve $(\bar{x}_1^0(t),\bar{x}_2^0(t))$ (the dashed
 line), which corresponds to the case $k_0=0$ is also
given in Fig. \ref{ris5}.
\begin{figure}[h]
\begin{center}
\includegraphics[scale=0.6]{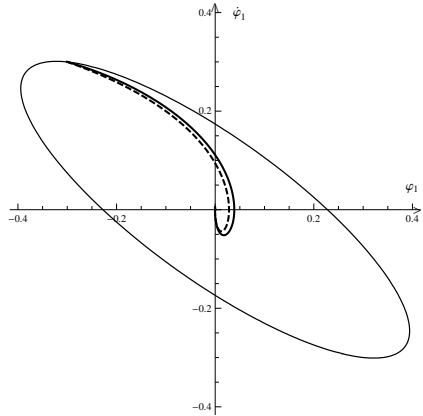} \caption{The
projection of the phase trajectory and the ellipsoid $Q$  on the
plane $O\varphi_1\dot{\varphi}_1$}\label{ris5}
\end{center}
\end{figure}
All the other trajectories fill up the domain between the
trajectories corresponding to $k_0=0$ and $k_0=4$ if stiffness
$k_0 $ satisfies the inequality $0\leq k_0\leq 4$ and trajectories
begin from $x(0).$ At $k_0=0$ the trajectory may be found from the
following system:
$$\left\{\begin{array}{l} \dot{x}_1=x_2,\quad
\dot{x}_2=-\displaystyle\frac{6\;x_1}{\theta^2}-\displaystyle\frac{3x_2}{\theta},\quad
\dot{x}_3=x_4,\quad
\dot{x}_2=-\displaystyle\frac{6\;x_1}{\theta^2}-\displaystyle\frac{3x_2}{\theta},\quad
\dot{\theta}=-1,\\
x_1(0)=-0.3,\quad x_2(0)=0.3,\quad x_2(0)=0,\quad
x_2(0)=0,\quad\theta(0)=3.2.
\end{array}\right.
$$
The plot of the components of the control on the trajectory
$$u_1=u_1(x_1^0(t),x_2^0(t),x_3^0(t),x_4^0(t))=
-\displaystyle\frac{6x_1^0(t)}{\theta^2(t)}-\displaystyle\frac{3x_2^0(t)}{\theta(t)}+
0.16x_1^0(t),$$
$$u_2=u_2(x_1^0(t),x_2^0(t),x_3^0(t),x_4^0(t))=
-\displaystyle\frac{6x_3^0(t)}{\theta^2(t)}-\displaystyle\frac{3x_4^0(t)}{\theta(t)}+
0.32x_3^0(t)$$ are given in Fig. \ref{ris6}. The norm of the
control $\|(u_1,u_2)^*\|=\sqrt{u_1^2+u_2^2}$ is given in
Fig.~\ref{ris7}, and we can see that  $\|(u_1,u_2)^*\|\le1$. The
controllability function $\theta(t)$ shown in in Fig.~\ref{ris8}
is close to the linear ($y=3.2-t$). The derivative of the
controllability function with respect to the system is given in
Fig.~\ref{ris9}, and we can see that it is negative. The estimate
for the time of motion (\ref{oc}) is of the form: $T\leq 3206.$ It
is fulfilled at all $0\leq k_0\leq 4,$ but at a particular value
of  $k_0$ the value of $T$ is less than 3206. The results of the
numerical calculations demonstrate that the time of motion $T$
from the point $x(0)$ at $k_0=4$ is $T\approx3.43,$
 besides it can be shown numerically that at
$0\leq k_0\leq 4$ the following inequality holds: $3.2\leq T\leq
3.43.$ All graphs are given at the trajectory at $k_0=4.$ At the
other values of $k_0$ graphs are similarly to that for present at Fig.
\ref{ris5} - \ref{ris9}.

\begin{figure}[h!]
\begin{center}
\begin{minipage}[h]{0.45\linewidth}
\includegraphics[width=1\linewidth]{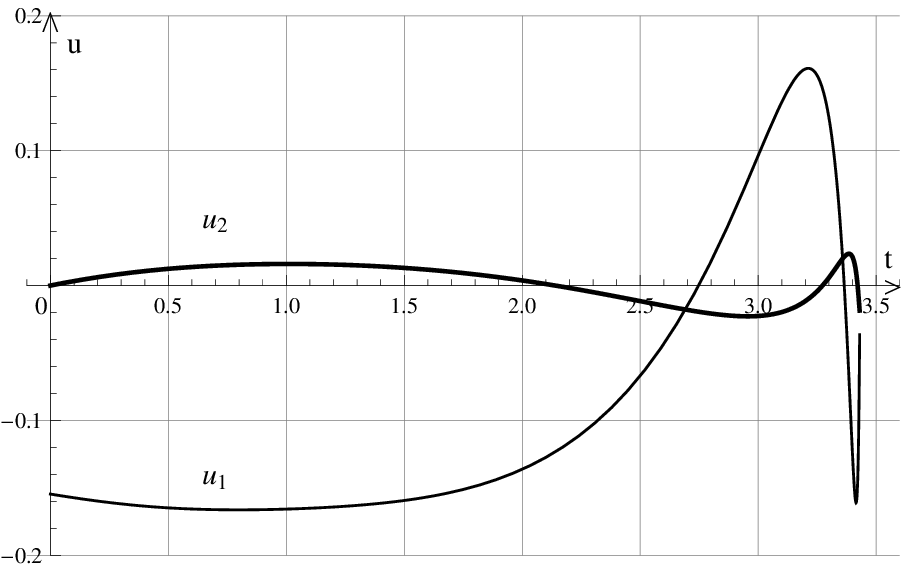}
\caption{The components of the control} \label{ris6}
\end{minipage}
\hfill
\begin{minipage}[h]{0.45\linewidth}
\includegraphics[width=1\linewidth]{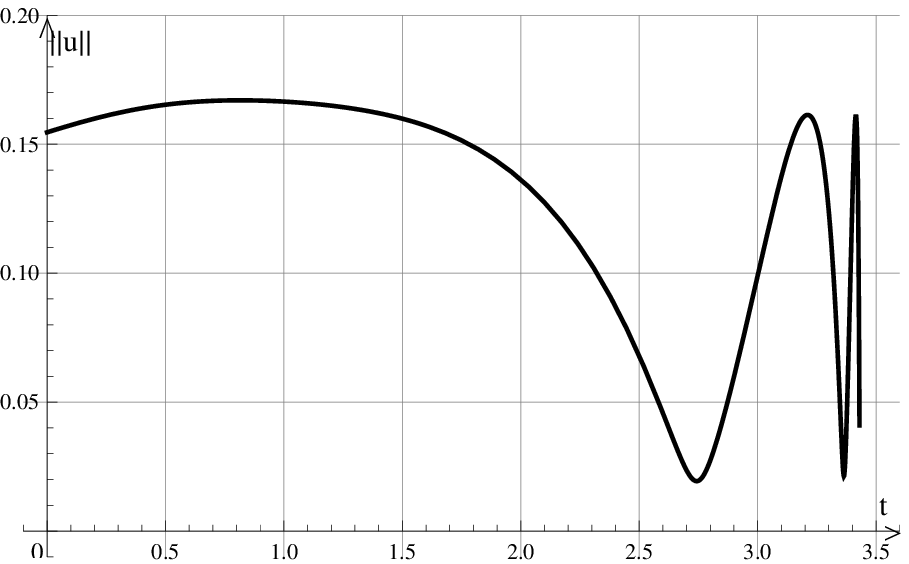}
\caption{The norm of the control}\label{ris7}
\end{minipage}
\end{center}
\end{figure}

\begin{figure}[h!]
\begin{center}
\begin{minipage}[h]{0.45\linewidth}
\includegraphics[width=1\linewidth]{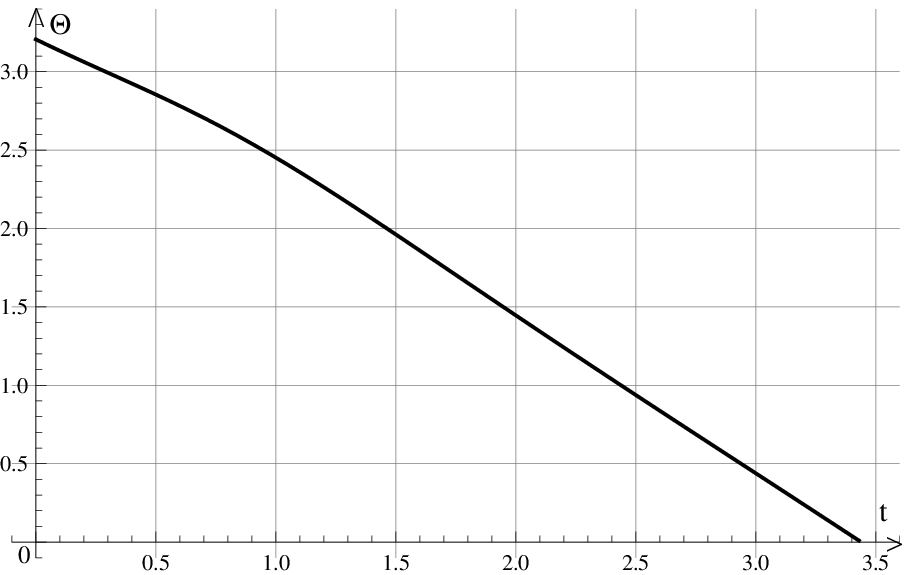}
\caption{The controllability function} \label{ris8}
\end{minipage}
\hfill
\begin{minipage}[h]{0.45\linewidth}
\includegraphics[width=1\linewidth]{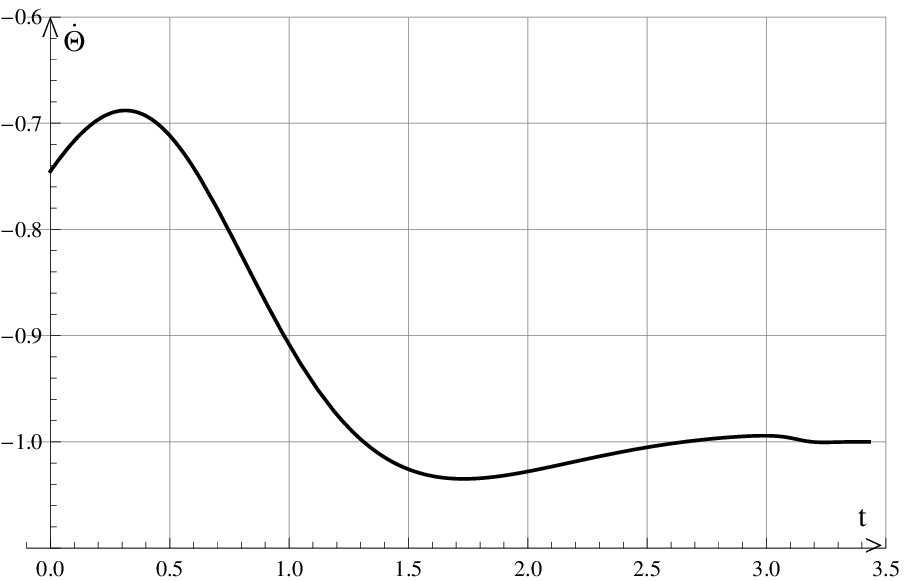}
\caption{The derivative of the controllability function w. r. t.
system}\label{ris9}
\end{minipage}
\end{center}
\end{figure}

\textit{The second case.} Let $l_1=l_2=l.$ Let us consider that the values
$m_1,$ $m_2$ and $k$ are known. Also we consider that the pendulum length
$l$ is unknown. Besides, the ratio $\displaystyle\frac{h}{l}$ is
known. Let us set
$$\displaystyle\frac{kh^2}{m_1l_1^2}=k_{21},\;\displaystyle\frac{kh^2}{m_2l_2^2}=k_{41},\;
\displaystyle\frac{g}{l}=r_{21}.$$

By  changing the  variables
$$
x_1 = \varphi_1,\quad x_2 = \dot \varphi_1,\quad x_3 =
\varphi_2,\quad x_4 = \dot \varphi_2 $$ system (\ref{pr2}) is
reduced to the following form:
$$
\left\{\begin{array}{l} \dot x_1 = x_2, \\
\dot x_2 = -(r_{21}+k_{21}) x_1 +k_{21}x_3+ u_1, \\
\dot x_3 = x_4, \\
\dot x_4 = k_{41}x_1 -(r_{21}+k_{41})x_3+ u_2.
\end{array}\right.
$$
The coefficient $r_{21}$ is unknown constant.

This system can be written in the matrix form (\ref{pr4}) where
the matrices $A_0$ and $B_0$ given by  relations (\ref{defa}) and
the matrices $K$ and $R$ are of the form:
$$K=\left(
\begin{array}{cccc}
 0 & 0 & 0 & 0 \\
 -k_{21} & 0& k_{21} & 0 \\
 0 & 0 & 0 & 0 \\
k_{41} & 0& -k_{41} & 0
\end{array}\right),\;
R=\left(
\begin{array}{cccc}
 0 & 0 & 0 & 0 \\
 -r_{21} & 0& 0 & 0 \\
 0 & 0 & 0 & 0 \\
0 & 0& -r_{21} & 0
\end{array}\right).\;$$

The matrices $F$ and $D(\Theta)$ are given by relations
(\ref{deffd}). Let us define  the controllability function
$\Theta=\Theta(x)$ at $x\neq 0$ as a unique positive solution of
Eq.  (\ref{pr5}). At $x=0$ we put $\Theta(0)=0.$ Similarly to the
first case we consider the solution of the robust feedback
synthesis in the ellipsoid $\;Q=\{x:\;\Theta(x)\leq c\}.$ The
constant $c>0$ is defined below. The constant $a_0$ satisfies
inequality (\ref{k11}) that takes the form:
\begin{equation}\label{pr6}
0<a_0\leq
\frac{3.58}{(13.42+2.83\max\{c^2,c\}\sqrt{k_{21}^2+k_{41}^2})^2}.
\end{equation}
In order to the solvability domain contains the ellipsoid of the
largest size, we choose $a_0$ as the largest value which satisfies
(\ref{pr6}).

The control given by relation (\ref{k12}) which solves the robust
feedback synthesis  is of the following form:
$$
u(x)=\left(\begin{array}{l}u_1(x)\\u_2(x)
\end{array}\right)=\left(\begin{array}{l}
-\displaystyle\frac{6x_1}{\Theta^2(x)}-\displaystyle\frac{3x_2}{\Theta(x)}+k_{21}(x_1-x_3)\\
-\displaystyle\frac{6x_3}{\Theta^2(x)}-\displaystyle\frac{3x_4}{\Theta(x)}+k_{41}(-x_1+x_3)
\end{array}\right),
$$ where $\Theta=\Theta(x)$ is a unique positive
solution of Eq.  (\ref{pr5}). For any value of $l$ this control
steers an arbitrary initial point $x_0$ to the origin in some
finite time  $T(x_0,l)\leq \Theta(x_0)/\gamma,$ where $\gamma$ is
an arbitrary number which satisfies the inequality $0<\gamma<1.$

The matrix $S=S(\Theta,t,x)$  given by relation (\ref{defs}) has
the following form:
$$S= \left(\begin{array}{cccc}
-\displaystyle\frac{24 g \Theta^2}{l_1}&-\displaystyle\frac{6 g \Theta^2}{l_1}&0&0\\[8pt]
-\displaystyle\frac{6 g \Theta^2}{l_1}&0&0&0\\[8pt]
0&0&-\displaystyle\frac{24 g \Theta^1}{l_1}&-\displaystyle\frac{6 g \Theta^2}{l_1}\\[8pt]
0&0&-\displaystyle\frac{6 g \Theta^2}{l_1}&0
\end{array}\right),
$$ where $\Theta=\Theta(x)$ is a unique positive solution of Eq.  (\ref{pr5}).

Taking into account the inequality (\ref{f_5}), let us find an
exact estimate for $c.$ Since
$\lambda_{max}((F^1)^{-1}S(\Theta))=\displaystyle\frac{g
\Theta^2}{2l},$ then at $x\in Q$ from (\ref{f_5}) it follows that
$$\dot{\Theta}\leq-1+{\lambda}_{max}({(F^1)}^{-1}S(\Theta))=-1+\displaystyle\frac{g
\Theta^2}{2l}\leq-1+\displaystyle\frac{g c^2}{2l}.$$ Let $c>0$ be
such that the following inequality holds:
\begin{equation}\label{pr9}
-1+\displaystyle\frac{g c^2}{2l}\leq -\gamma.\;
\end{equation}
Then $\dot{\Theta}\leq -\gamma.$ From (\ref{pr9}) it follows that
$c\leq\sqrt{0.2\;l(1-\gamma)}.$ In order to solvability domain
contain the ellipsoid of the largest size, we choose $c$ as the
largest value which satisfies (\ref{pr9}). So, we obtain the
following solvability domain:
\begin{equation}\label{pr10}
Q=\{x:\;\Theta(x)\leq \sqrt{0.2\;l(1-\gamma)}\}.
\end{equation}

Let
$$m_1=1,\;m_2=2,\;k=1,\;\displaystyle\frac{h}{l}=\displaystyle\frac{1}{4},\;\gamma=0.001.$$
Then
$k_{21}=\displaystyle\frac{kh^2}{m_1l^2}=\displaystyle\frac{1}{16},
\;k_{41}=\displaystyle\frac{kh^2}{m_2l^2}=\displaystyle\frac{1}{32},
\;r_{21}=\displaystyle\frac{9.8}{l}.$

Let the length $l$ satisfies the constraint $l\geq 30,$ but the
value of $l$ is unknown. Then the set of points (\ref{pr10}) from
which we may steer to the origin  is the ellipsoid of the form
$Q=\{x:\;\Theta(x)\leq 2.47\}.$ Besides from (\ref{pr10}) it
follows that the length $l$ decreases as values of axes of
ellipsoid $Q$ decrease. At $c=2.47$ inequality (\ref{pr6}) on
$a_0$ takes the form: $a_0\leq 0.016\ldots$ Put $a_0=0.016.$

Similarly  to the first case let the initial point be equal to
$x(0)=(-0.3,0.3,0,0),$ $\;x(0)\in Q.$ The unique positive solution
$\Theta_0$ of Eq.  (\ref{pr5}) is $\Theta_0\approx 2.44.$ The
estimate for the time of motion (\ref{oc}) is of the form: $T\leq
2438.$  It is fulfilled  at all $l\geq 30,$  but at a particular
value of $l$ the value of $T$ is  less than 2438. The results of
the numerical calculations demonstrate that the time of motion $T$
from the point $x(0)$ at $l=30$ is $T\approx3,$
 besides it can be shown numerically that at
$l\geq 30$ the following inequality holds: $2.44\leq T\leq 3.$ The
further considerations are similar to those in the first case.

\end{document}